\def\anonymous{0} 
\newcommand{\bx}{\boldsymbol{x}}
\newcommand{\ul}{\mathbf}
\newcommand{\R}{\mathbb{R}}
\newcommand{\cG}{\mathcal{G}}
\newcommand{\cA}{\mathcal{A}}
\newcommand{\Ad}{\text{Ad}}
\newcommand{\ad}{\text{ad}}
\DeclareMathOperator{\argmin}{argmin}
\DeclareMathOperator{\SE}{SE}
\DeclareMathOperator{\SO}{SO}
\begin{document}

\title{Analysis and Computation of Geodesic Distances on Reductive Homogeneous Spaces. 
}
\titlerunning{Analysis of Geodesic Distances on Reductive Homogeneous Spaces}

{\if\anonymous0
    \author{
        R.~Duits$^1$, 
        G.~Bellaard$^1$, 
        A.B.~Tumpach$^2$. 
    }
    \authorrunning{
        R.~Duits, 
        G.~Bellaard, 
        A.B.~Tumpach. 
    }
    \institute{
        $^1$ CASA \& EAISI, Department of Mathematics \& Computer Science, Eindhoven University of Technology, The Netherlands.\\
        $^2$ Wolfgang Pauli Institute, Vienna, Austria \\
        \email{
            \{r.duits,
            g.bellaard\}@tue.nl, 
            alice-barbora.tumpach@univ-lille.fr
        }
    }
\else
    \author{Anonymous Author(s)}
    \authorrunning{Anonymous Author(s)}
    \institute{Anonymous Institute(s)}
\fi}
    
\maketitle
\begin{abstract}
Many
geometric machine learning and image analysis applications, require a left-invariant metric on the 5D homogeneous space of 3D positions and orientations $\SE(3)/\SO(2)$. This is done in Equivariant Neural Networks (G-CNNs), or in PDE-Based Group Convolutional Neural Networks (PDE-G-CNNs), where the Riemannian metric enters in multilayer perceptrons, message passing, and max-pooling over Riemannian balls.
In PDE-G-CNNs it is proposed to take the minimum left-invariant Riemannian distance over the fiber in $\SE(3)/\SO(2)$,
whereas in G-CNNs and in many geometric image processing methods an $\SO(2)$-conjugation invariant section $\sigma$ is advocated.\\
The conjecture rises whether that 
computationally much more efficient section $\sigma$
indeed always selects distance minimizers over the fibers.
We show that this conjecture does \emph{not} hold in general, and in the logarithmic norm approximation setting used in practice we analyze the small (and sometimes vanishing) differences.
We first prove that the 
minimal distance section $\sigma_d$ is reached by minimal horizontal geodesics with constant momentum and zero acceleration along the fibers, and we generalize this result to (reductive)
homogeneous spaces with legal metrics and commutative structure groups.
\end{abstract}
\keywords{Riemannian Geometry, Sub-Riemannian Geometry, Fiber Bundles}

\section{Introduction and Background}

\subsection{Research context}

In many geometric image analysis applications one has
to include multi-orientation image processing that relies on
left-invariant Riemannian metrics and their logarithmic norm approximations.

For example, in diffusion-weighted MRI 
one obtains after some inverse methods processing an orientation density function of water molecules that are generally believed to follow the biological fibers in the brain. These inverse methods are effective, but involve false peaks that are not aligned with peaks in neighboring spherical distributions. Therefore contextual image processing group convolution with heat-kernel approximations on the 5D homogeneous space $\mathbb{M}_3=\SE(3)/\SO(2)$ of 3D positions and orientations is needed to clean the fiber structures in the data \cite{portegies2015improving,DuitsIJCV2010},  valuable in clinical applications \cite{portegies2015improving,Meesters}.

Furthermore, in many roto-translation equivariant deep learning methods one includes similarity metrics between points on $\mathbb{M}_{3}$ (`local orientations') where one again relies on left-invariant Riemannian metrics on the Lie group quotient
$\mathbb{M}_{3}$ and their logarithmic norm approximations.
For instance in \cite{bekkers2023fast}
where `fast, expressive $\SE(3)$ equivariant neural networks' (G-CNNs) are achieved with many applications. There the neural network kernels 
follow symmetry considerations in geometric image processing \cite{portegies2015new}.

In these works and in 
\cite{Tumpach,Tumpach2,bolelli2023neurogeometric}, a convenient choice of section in the quotient $G/H$ is helpful for carrying differential geometrical tools from the group $G$ towards $G/H$ in a computationally tangible way. For $G=\SE(3)=\R^3 \rtimes \SO(3)$ and $H=\SO(2)$ we advocate a specific section $\sigma:G/H \to G$ as we explain next. This specific section $\sigma$ minimizes the angular velocity (as we will prove later in Prop.~1) and as we will show in Theorem 3 it often approximates the computationally much more demanding section $\sigma_d$ that selects in each fiber the (sub)-Riemannian distance minimizer useful for a truly optimal alignment in position and orientation space $G/H$ relevant for all aforementioned applications.  

\subsection{Notation}

Consider $G=\SE(3)$ with product $g_1g_2=(\ul{x}_1,R_1) (\ul{x}_2,R_2)=(\ul{x}_1 + R_1 \ul{x}_2, R_1 R_2)$.
We express rotations in Euler angles $R=R_{e_z,\gamma} R_{e_y,\beta} R_{e_z,\alpha} \in \SO(3)$, with angles $\beta \in (0,\pi)$, $\alpha, \gamma \in [0,2\pi)$, and we use canonical coordinates $c^i$ for $\SE(3)$:
\begin{equation}\label{Ai}
    \begin{array}{l}
        (\ul{x},R_{e_z,\gamma} R_{e_y,\beta} R_{e_z,\alpha})=g=\exp \left( \textstyle \sum \limits_{i=1}^6 c^i A_i\right) \
        \textrm{
        w.r.t. 
        basis in } \mathfrak{g}=T_e(G):\\
        \{A_i\}_{i=1}^6\equiv \{\left.\partial_{x}\right|_{e}
        \left.\partial_{y}\right|_{e},
        \left.\partial_{z}\right|_{e},
        \left.\partial_{\tilde{\gamma}}R_{e_x,\tilde{\gamma}}
        \right|_{\tilde{\gamma}=0},
        \left.\partial_{\tilde{\beta}}R_{e_y,\tilde{\beta}}
        \right|_{\tilde{\beta}=0},
        \left.\partial_{\tilde{\alpha}}R_{e_z,\tilde{\alpha}}
        \right|_{\tilde{\alpha}=0},
        \}
    \end{array}
\end{equation}
with unit element $e=(\ul{0},I) \in G$. 
Consider subgroup $H = \{(\ul{0},R_{\ul{e}_{z},\alpha})|\; \alpha \in [0,2\pi)\}\equiv \SO(2) = \textrm{Stab}_{\SE(3)}(\ul{0},\ul{a})$, with $\ul{a}=\ul{e}_z=(0,0,1)$, the quotient  $G/H$, and canonical projection $\pi: G\rightarrow G/H$.
The vertical subbundle of $T(G)$ will be denoted by $\mathcal{V} = \ker \pi_*$. The horizontal part then follows by taking the orthogonal complement $T(G)=\mathcal{V}^{\bot} \oplus \mathcal{V} =\mathcal{H} \otimes \mathcal{V}$. 
Let \mbox{$\textrm{Ad}=(\textrm{conj})_*$} be the derivativeof conjugation at $e \in G$, i.e. $\textrm{Ad}(h)= \textrm{conj}_*(h)=(L_h)_* (R_{h^{-1}})_*$, with $R_hg=gh$ and $L_hg=hg$. 

On $G=\SE(3)$, we consider the following  metric tensor field:
\begin{equation} \label{eq:legal_metric_se3}
    \cG=g_{11} (\omega^1 \otimes \omega^1+ \omega^2 \otimes \omega^2) + g_{33}\, \omega^3 \otimes \omega^3 +
    g_{44} (\omega^4\otimes \omega^4+ \omega^5 \otimes \omega^5) + g_{66} \, \omega^{6} \otimes \omega^6
\end{equation}
where $\{\omega^{i}\}_{i=1}^6$ is the dual frame of the left-invariant frame $\{\cA_i\}_{i=1}^6$ given by: \(\left.\cA_{i}\right|_{g}=(L_g)_*A_i\),  \(\langle\omega^{i}, \cA_{j}\rangle=\delta^{i}_j\), with \(A_i\) defined by \eqref{Ai}. 

Along a geodesic $\gamma$, we write $\dot{\gamma}(t)=\sum_{i=1}^{\textrm{dim}(G)} u^{i}(t) \left.\mathcal{A}_{i}\right|_{\gamma(t)} \in T_{\gamma(t)}(G)$ for its velocity, and \(\lambda(t)=\sum_{i=1}^{\textrm{dim}(G)}\lambda_i(t) \left.\omega^{i}\right|_{\gamma(t)} \in T^{*}_{\gamma(t)}(G)\) for its momentum.  
We will denote by $d_{\cG}$ the Riemannian metric on $G$ associated to $\cG$.
We will denote by $\rho_\cG$ the logarithmic norm: $\rho_\cG(g) = \|\log g \|_\cG$, where $\log : G \to \mathfrak{g}$ is the Lie group logarithm.

\subsection{Sections $\sigma$, $\sigma_d$ and $\sigma_\rho$ of  $\pi: \SE(3) \rightarrow \SE(3)/\SO(2)$}

Both \cite{bekkers2023fast,portegies2015new} propose to take the following section $\sigma$ of $\pi: G\rightarrow G/H$:

\begin{definition}[Section $\sigma$] 
    Define $\sigma: G/H \to G$ by the condition $\alpha=-\gamma$:
    \begin{equation} \label{sigma}
        \sigma([\ul{x},R_{e_z,\gamma} R_{e_y,\beta} R_{e_z,\alpha}])=
        (\ul{x}, R_{e_z,\gamma} R_{e_y,\beta} R_{e_z,-\gamma}), \textrm{ for all } \ul{x}=(x,y,z) \in \R^3.
    \end{equation}
\end{definition}

The strength of this section is best visible in canonical coordinates $c^i$ for $\SE(3)$ since, by the computation of the logarithm in $\SE(3)$ given in
\cite[eq.55]{DuitsIJCV2010}, we have
\begin{equation} \label{eq:c6_expression}
    c^{6}
    =\sin(\alpha +\gamma) \; \cos^{2}(\beta/2) / \textrm{sinc}(q) 
    \ \ \Rightarrow \ ( c^{6}=0 \; \; \Leftrightarrow \; \; \alpha=-\gamma),
\end{equation} 
under the condition for the rotation angle \mbox{$q:=\sqrt{|c^4|^2 \!+\! |c^5|^2 \!+\!|c^6|^2} < \pi$.}
So, by \eqref{eq:c6_expression}, $\sigma$ selects in  fiber $[g]:=gH$ the element $\sigma([g]):=g_0 \in [g]$ with $c^6(g_0)=0$. \\
The equivalence in \eqref{eq:c6_expression} \emph{also} follows by $e=\sigma(H)$ and translation-invariance of $\sigma$. 

Other sections are also proposed in sub-Riemannian image processing \cite{Rodr} and stereo vision \cite{bolelli2023neurogeometric}.
The choice for section $\sigma$ defined by \eqref{sigma} is motivated by: 
\begin{enumerate}
    \item low computational costs,
    \item symmetries considerations, see Lemma~\ref{lem:ref} and \cite[Thm.1, Figs.2\&3]{portegies2015new},
    \item  vanishing torsion \cite{DuitsIJCV2010} of the spatial part of SR-horizontal exp-curves,
    \item $\sigma([e])$ is perpendicular to the direction of the fiber $[e] = H$.
\end{enumerate}

In this article, we will moreover show that $\sigma$ is close to two other sections:

\begin{definition}[Section $\sigma_d$] \label{def:section} 
    Define  $\sigma_d: G/H \to G$ by
    \begin{equation}\label{statement}
        \begin{array}{l}
            \forall_{g \in G}\;:\; 
            \sigma_d([g]) = \underset{p\,\in \,[g]=gH}{\argmin}\ d_\cG(p, e), \textrm{with Riemannian distance } \\
            d_{\cG}(p,e)=
            \inf \limits_{
            {\scriptsize
            \begin{array}{c}
            \gamma \in {\rm PC}^{1}([0,1], G), \\
            \gamma(0)=e, \gamma(1)=p
            \end{array}
            }} \int_0^{1} \sqrt{\cG_{\gamma(t)}(\dot{\gamma}(t),\dot{\gamma}(t))}\, {\rm d}t\ . 
        \end{array}
    \end{equation}
\end{definition}

The (non-necessary smooth) section $\sigma_d$ selects the true distance minimizer(s)
over the fibers $[g]=gH$ for the Riemannian distance $d_{\cG}(\cdot,e)$ towards $e$.
This section is proposed in PDE-based equivariant CNNs (PDE-G-CNNs) in~\cite{smets2022pdebased}. 

\begin{definition}[Section $\sigma_\rho$] Define  $\sigma_\rho: G/H \to G$ by
    \begin{equation}\label{sigma_rho}
    \sigma_{\rho}([g]):=\argmin_{p \in [g] } \rho_{\cG}(p) = \argmin_{p \in [g]} \|\log p\|_{\cG}.
    \end{equation}
\end{definition}
We will analyze sections $\sigma, \sigma_{\rho}, \sigma_d$. Our main results are: 
\begin{enumerate}
    \item We show in Thm.~\ref{thm:no_acceleration} using Pontryagin Max Principle \cite{agrachevbook} that (sub-)Riemannian geodesics in $(G, \cG)$ have \emph{constant momentum  and zero acceleration along the fibers}. 
    As a consequence we recover the Riemannian submersion theorem \cite{Gallot1987,Mennucci2013,kling95,Michor} stating that the minimal distance section $\sigma_d$ yields the set of points reachable from $e$ by a \emph{minimizing} Riemannian horizontal geodesic.
    
    \item We generalize Thm.~\ref{thm:no_acceleration} to  homogeneous spaces with legal metrics (Def.~\ref{def:legal_metric}) in Thm~\ref{th:3}.
    
    \item  We show in  Prop.~\ref{prop:SO3} that $\rho_{\cG}(\sigma([g])) \geq \rho_{\cG}(\sigma_{\rho}([g])) \geq d_{\cG}(\sigma_d([g]), e)$ and that sections $\sigma,\sigma_d, \sigma_{\rho}$ do coincide 
    on $\SO(3)/\SO(2)\equiv S^2$.
    
    \item We show in Thm.~\ref{thm:sections} that the smooth section $\sigma$ -- \emph{which is much easier to compute than $\sigma_d$ or $\sigma_\rho$} -- is often close to sections $\sigma_d$ and $\sigma_{\rho}$.
\end{enumerate}

\section{Minimal Distance Elements in the Fibers $[g]$}

\begin{definition}[Legal Metric] \label{def:legal_metric}
    A left invariant Riemannian metric $\cG$ on $G$ given by an $Ad(H)$-invariant scalar product in $\mathfrak{g}=T_e(G)$ is called a \emph{legal} metric.
\end{definition}

Legal metrics are  left-invariant metrics on $G$ which descend to Riemannian 
metrics on $G/H$. Metric $\cG$ is legal iff
$\forall_{g_1,g_2,q \in G} \forall_{h \in H}:d_{\cG}(qg_1,qg_2)=d_{\cG}(g_1h,g_2h)$. 

\begin{figure}
    \centering
    \includegraphics[width=0.32\linewidth]{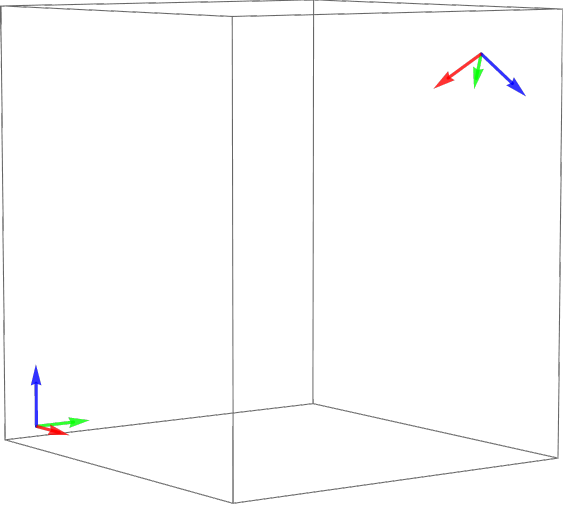}
    \includegraphics[width=0.32\linewidth]{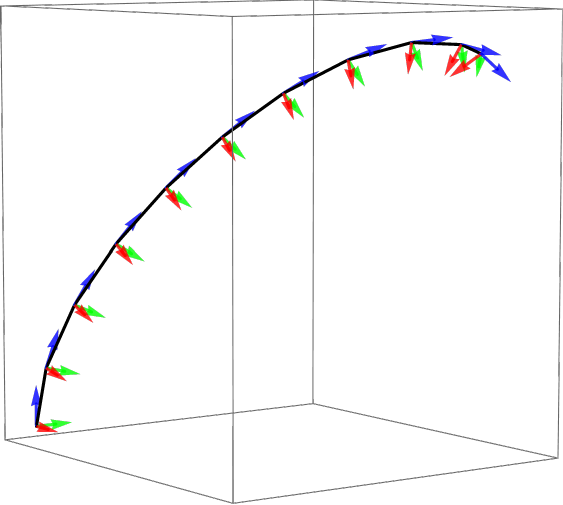}
    \includegraphics[width=0.32\linewidth]{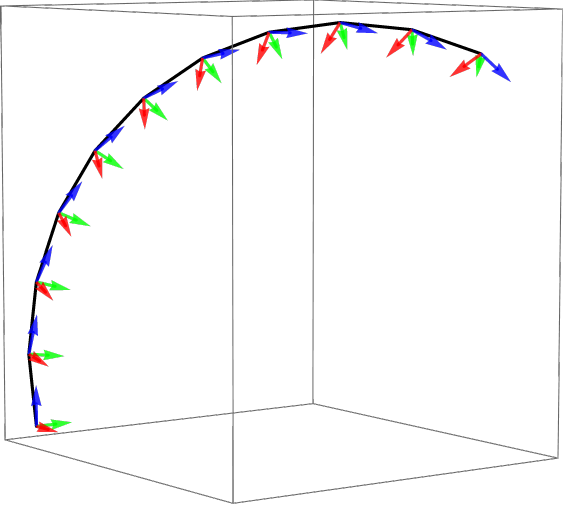}
    \caption{
        Left to right:
        \textbf{1)} start-frame $e$ and (far) end-frame $p$,
        \textbf{2)} the \emph{exact} SR geodesic \cite{DuitsJDCS} being the minimizing curve in (\ref{statement}) with $g_{11}=g_{44}=1, g_{22}=g_{33}=\infty$ connecting $e$ and $p$,
        \textbf{3)} the minimal horizontal exponential curve (ending with same orientation in blue).
        Above we visualize points $g=(\mathbf{x},R)$ in $\SE(3)$ with a local frame in $T_{\ul{x}}(\mathbb{R}^3)$.
    }\label{fig:intro}
\end{figure}

From the commutator relations in the Lie algebra of $\SE(3)$, it follows that $\cG$ defined by \eqref{eq:legal_metric_se3} is legal. 
It is a Riemannian (R) metric for $g_{11}, g_{33}, g_{44}, g_{66}$ positive, a Gauge-invariant (GI) metric \cite{Tumpach201646,Tumpach,Tumpach2} for $g_{11}, g_{33}, g_{44},$ positive  and $g_{66} = 0$, and a Sub-Riemannian (SR) metric for $g_{33}$, $g_{44}$, $g_{66}$ positive and $g_{11}=\infty$.
In the SR case, one can constrain the curves in the metric optimization to $\Delta := \langle \cA_1,\cA_2,\cA_6 \rangle^\bot$. 
This SR case is depicted in Fig.~\ref{fig:intro}.
In general $(G, \cG)$ is geodesically complete, also in the SR setting by the Chow-Rashevskii theorem.

\begin{definition}
    A left-invariant subbundle $\mathcal{H} \subset T(G)$ is called
    \begin{equation} \label{Delta}
        \begin{array}{l} 
            \textrm{Riemannian (R) horizontal if } \mathcal{H} = \mathcal{V}^{\bot}=\langle \cA_6 \rangle^{\bot}
            , \\
            \textrm{Sub-Riemannian (SR) horizontal if } \mathcal{H} = \Delta
            = \langle \cA_1,\cA_2,\cA_6 \rangle^\bot
        \end{array}
    \end{equation}
\end{definition}

We have $T(G)=\mathcal{H} \oplus \mathcal{V}$ in the Riemannian setting, and correspondingly $T(G)=\Delta \oplus \langle \mathcal{A}_1,\mathcal{A}_2,\mathcal{A}_6\rangle$ in the Sub-Riemannian setting.

\begin{theorem} \label{thm:no_acceleration} 
    Set $\ul{a}=(0,0,1)$, and $G=\SE(3)$, $H=\textrm{Stab}_{\SE(3)}(\ul{0},\ul{a})\equiv \SO(2)$.
    Let $\cG$ be a legal metric on $G$ w.r.t.  $H$, with either $g_{11}$ positive (Riemannian case) or infinite (Sub-Riemannian case). Consider a solution $t \mapsto (\gamma(t), \lambda(t))$ of the corresponding geodesic flow on $T^*G.$    
    Then along fibers we have constant momentum and no acceleration:
    \begin{equation}
        \dot{u}^6(t)=\dot{\lambda}_6(t)=
        \tfrac{d}{dt}\langle \lambda(t), \left.\mathcal{A}_6\right|_{\gamma(t)}\rangle =0.
    \end{equation}
\end{theorem}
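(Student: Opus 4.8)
The plan is to exploit that the fiber direction $\cA_6$ is the infinitesimal generator of the right $\SO(2)=H$-action $g\mapsto g\exp(sA_6)$ on $G$, which is an \emph{isometry} of $\cG$ precisely because $\cG$ is legal (left-invariance together with $\Ad(H)$-invariance gives right-$H$-invariance). Noether's theorem then ensures that the momentum paired with this Killing field is conserved along (sub-)Riemannian geodesics, and that quantity is exactly $\langle\lambda(t),\cA_6|_{\gamma(t)}\rangle=\lambda_6(t)$. To turn this into the computational statement of Thm.~\ref{thm:no_acceleration}, I would verify it directly from the Hamiltonian/PMP flow.

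First I would write the normal geodesic Hamiltonian on $T^*G$ in the left-invariant trivialization, $H(\lambda)=\tfrac12\sum_{i,j}\cG^{ij}\lambda_i\lambda_j$, with $(\cG^{ij})$ the (pseudo-)inverse of the matrix of $\cG$ in $\{\cA_i\}$: in the Riemannian case block-diagonal with blocks $\cG^{11}(\lambda_1^2+\lambda_2^2)$, $\cG^{33}\lambda_3^2$, $\cG^{44}(\lambda_4^2+\lambda_5^2)$, $\cG^{66}\lambda_6^2$, and in the Sub-Riemannian case with $\cG^{11}=0$ and no $\lambda_6$-term (extremals confined to $\Delta$). Introducing the body velocity $u=\sum_i u^iA_i\in\mathfrak g$ with $\dot\gamma=(L_\gamma)_*u$, the momentum part of Hamilton's equations is the Lie--Poisson (Euler--Arnold) equation, which componentwise reads $\dot\lambda_k=\pm\langle\lambda,[A_k,u]\rangle$ (fixed sign per the convention of \cite{agrachevbook}). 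Since $\langle\lambda(t),\cA_6|_{\gamma(t)}\rangle=\lambda_6(t)$ by duality of the frames, this reduces the first claim to computing $\dot\lambda_6=\pm\langle\lambda,[A_6,u]\rangle=\pm\langle\lambda,\ad(A_6)u\rangle$.

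The key step is that legality is exactly the statement that $\ad(A_6)$ is skew-adjoint for the inner product underlying $\cG$ (differentiate $\Ad(\exp sA_6)$-invariance at $s=0$). From the $\SE(3)$ commutators, $\ad(A_6)$ acts as a $90^\circ$ rotation in each of the planes $\langle A_1,A_2\rangle$ and $\langle A_4,A_5\rangle$ and by $0$ on $A_3,A_6$; in particular $[A_6,\Delta]\subseteq\Delta$, so in both cases $u$ lies in an $\ad(A_6)$-invariant subspace on which $\cG$ is nondegenerate and $\lambda$ is the $\cG$-dual of $u$. Skew-adjointness then gives $\langle\lambda,\ad(A_6)u\rangle=\cG(u,\ad(A_6)u)=-\cG(\ad(A_6)u,u)=0$, i.e.\ $\dot\lambda_6=0$; and $\dot u^6=0$ follows since $u^6=\cG^{66}\lambda_6$ in the Riemannian case and $u^6\equiv0$ along the flow in the Sub-Riemannian case. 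I expect the only genuine obstacles to be bookkeeping: fixing the sign in the Lie--Poisson equation for the left trivialization, and, in the Sub-Riemannian case, making the degenerate-metric version of the skew-adjointness argument precise --- one restricts to the bracket-generating $\Delta$, uses $[A_6,\Delta]\subseteq\Delta$, and uses that ``geodesic flow on $T^*G$'' here means the normal extremals of the PMP, so that $u\in\Delta$ and $\lambda|_\Delta=\cG|_\Delta(u,\cdot)$ are legitimate. That the cancellation is forced by, and only by, $\Ad(H)$-invariance of $\cG$ is exactly what lets the argument pass to reductive homogeneous spaces with commutative $H$ in Thm.~\ref{th:3}.
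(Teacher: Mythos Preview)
Your proof is correct and rests on the same machinery as the paper's: write the geodesic flow in the left trivialization as the Lie--Poisson/Euler--Arnold system $\dot\lambda=\mathrm{coad}_{\dot\gamma}\lambda$, then show that the $A_6$-component of $\dot\lambda$ vanishes. The difference is one of emphasis. The paper's proof of this theorem carries out the computation \emph{explicitly} with the structure constants $c^k_{j6}$ of $\mathfrak{se}(3)$, obtaining the pairwise cancellations $g^{11}(1-1)\lambda_1\lambda_2+g^{44}(1-1)\lambda_4\lambda_5=0$; you instead invoke the \emph{reason} those cancellations occur---that $\Ad(H)$-invariance of $\cG$ makes $\ad(A_6)$ skew-adjoint, so $\cG(u,\ad(A_6)u)=0$---and couple this with $[A_6,\Delta]\subseteq\Delta$ to handle the SR case. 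Your argument is precisely what the paper itself uses later for the general reductive/commutative-$H$ case (their Thm.~\ref{th:3}, Eq.~(\ref{coreid})); you are effectively proving Thm.~\ref{thm:no_acceleration} as the specialization of Thm.~\ref{th:3} rather than as a standalone computation. The Noether/Killing-field framing you add (right $H$-action is isometric $\Rightarrow$ $\cA_6$ is Killing $\Rightarrow$ $\lambda_6$ conserved) is not made explicit in the paper but is a clean way to see why legality is exactly the hypothesis needed.
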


\begin{proof}
    The Hamiltonian flow  associated to left-invariant Riemannian Hamiltonian $\mathfrak{h}_R(\lambda)=|\cG^{-1}_{\gamma}(\lambda,\lambda)|^2$ of left-invariant \emph{Riemannian} manifolds is given by:
    \begin{equation} \label{HFR}
        \dot{\nu}= \vec{\mathfrak{h}}_R\,(\nu), \textrm{ with } \nu=(\gamma, \lambda) \Leftrightarrow
        \left\{
        \begin{array}{ll}
            \dot{\gamma}= \cG^{-1} \lambda \in T_{\gamma(\cdot)}(G) & \\
            \dot{\lambda}=\textrm{coad}_{\dot{\gamma}}(\lambda)& 
        \end{array}
        \right.
    \end{equation}
    where $\vec{\mathfrak{h}}_R$ is the Hamiltonian vector field on $T^*G$ associated to the Hamiltonian $\mathfrak{h}_R$ \cite{agrachevbook}  (i.e. \(({\rm d}g \wedge \rm {d}\lambda)(\cdot, \overrightarrow{\mathfrak{h}}_R) = {\rm d}\mathfrak{h}_R$), and $\textrm{coad}_v(\lambda)(w)=
    \langle \lambda, \ad(v)(w) \rangle\). 
    Here the $\ad$ operator is obtained by conjugating the usual $\ad$ on the Lie algebra
    $T_e G$ with push-forward of the left multiplication of the inverse of the base-point $\gamma(t)$. 
    That is, $\lambda$ and $\dot{\gamma}$ are identified with their left invariant extensions and the second equation in \eqref{HFR} reduces to the Euler equation on the dual of the Lie algebra.
    
    Similarly, the Hamiltonian flow of left-invariant SR manifolds of Hamiltonian $\mathfrak{h}_{SR}(\lambda)=|\cG^{-1}_{\gamma}(P_{\Delta}^*\lambda,P_{\Delta}^*\lambda)|^2$ with dual projection $P_{\Delta}^*$ is given by:
    \begin{equation} \label{HFSR}
        \dot{\nu}= \vec{\mathfrak{h}}_{SR}\,(\nu), \textrm{ with } \nu=(\gamma, \lambda) \Leftrightarrow
        \left\{
        \begin{array}{ll}
            \dot{\gamma}= \cG^{-1} P_{\Delta}^* \lambda \in \left.\Delta\right|_{\gamma(\cdot)} & \ 
            \\
            \dot{\lambda}=\textrm{coad}_{\dot{\gamma}}(\lambda)& \ 
        \end{array}
        \right. 
    \end{equation} 
    \emph{Now the co-adjoint action in (\ref{HFR}), (\ref{HFSR}) gives no momentum change along fibers~(\ref{coreid}).
    Thereby geodesics cannot change their velocity in the vertical (fiber) direction:}
    \begin{equation} \label{boxed}
        \begin{array}{rl}
            \textrm{Riemannian: }&    P_{\mathcal{V}}^{*}(\dot{\lambda})=0 \textrm{ \& }\lambda =\cG \dot{\gamma}\ , \\ 
            \textrm{Sub-Riemannian: }& P_{\mathcal{V}}^{*}(\dot{\lambda})=0
            \textrm{ \& }
            P_{\Delta}^*\lambda =\cG \dot{\gamma},
        \end{array}
    \end{equation}
    so it does not matter whether one puts 0, finite, or $\infty$ costs on the fiber direction and the 3 constructions of Riemannian homogeneous spaces in \cite{Tumpach,Tumpach2} now coincide!
    
 We verify this statement explicitly in left-invariant coordinates in the setting: \\ \underline{Riemannian}:
    $\lambda=\sum \limits_{i=1}^6\lambda_i \omega^{i}$,
    $\dot{\gamma}=\sum \limits_{i=1}^6 u^{i} \left.\mathcal{A}_i\right|_{\gamma}$,
    $\mathfrak{h}_R=\sum \limits_{i=1}^6 g^{ii} \lambda_i^2$, $u^j=g^{jj}\lambda_j$,
    \mbox{(\ref{boxed}) $\Rightarrow $}
    $
    \dot{\lambda}_{6}
    =\{\mathfrak{h}_R,\lambda_6\}=\sum \limits_{k=1}^{6} \sum \limits_{j=1}^{6} c^{k}_{j6} \lambda_k(t)g^{jj}\lambda_{j}(t)= g^{11}(1-1) \lambda_{1}\lambda_2 + g^{44}(1-1)\lambda_4\lambda_5 =0
    $. 
    \\
    \underline{Sub-Riemannian}:\ $\mathfrak{h}_{SR}=
    \sum \limits_{i=3}^5 g^{ii} \lambda_i^2$,
    $\dot{\lambda}_6(t)=\{\mathfrak{h}_{SR},\lambda_6\}$, where the Poisson brackets yield $\{\mathfrak{h}_{SR},\lambda_6\}=\sum \limits_{k=1}^{6} \sum \limits_{j=3}^{5} c^{k}_{j6} \lambda_k(t)u^{j}(t) =0=g_{66} \; \dot{u}^6(t)$. \\
    In both cases $\dot{\lambda}_6=\dot{u}^6=0$, so if $u^6(0)=0$, $g_{66}$ (costs along fiber) is irrelevant. $\hfill \Box$ 
\end{proof}

\begin{remark}
By the proof above, the cost $g_{66}$ in the direction of the fibers of the canonical projection $\SE(3)\rightarrow \SE(3)/\SO(2)$ is irrelevant in the computations of geodesics both for the Riemannian case and the Sub-Riemannian case. Consequently we could as well put $g_{66} = 0$ as in the gauge-invariant case \cite{Tumpach201646}.
\end{remark}
\begin{corollary}\label{cor:1}
    (Sub)-Riemannian Geodesics in $G$ that start horizontal stay horizontal. The geodesic that realizes the minimum of the distance between $e$ and the fiber of $[g]\in G/H$ is a horizontal geodesic. 
    Consequently, $\sigma_d$ selects the point(s) in the fiber that can be connected with a minimizing horizontal geodesic. 
\end{corollary}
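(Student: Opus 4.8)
The plan is to derive all three assertions from Theorem~\ref{thm:no_acceleration} (constancy of $\lambda_6$ and $u^6$ along every geodesic), the first-order optimality conditions for minimising the distance from a point to a submanifold, and geodesic completeness of $(G,\cG)$.

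For the claim that horizontality is preserved I would argue as follows. In the Riemannian case $\mathcal{V}|_g=\langle\mathcal{A}_6|_g\rangle$, and since $\cG$ is diagonal in the left-invariant frame $\{\mathcal{A}_i\}$ we have $\mathcal{H}=\mathcal{V}^{\bot}=\langle\mathcal{A}_1,\dots,\mathcal{A}_5\rangle$, so a geodesic is horizontal at time $t$ precisely when $u^6(t)=0$; since Theorem~\ref{thm:no_acceleration} gives $u^6$ constant, $u^6(0)=0$ forces $u^6\equiv0$. In the Sub-Riemannian case there is nothing to prove: every admissible curve has $\dot\gamma\in\Delta=\langle\mathcal{A}_3,\mathcal{A}_4,\mathcal{A}_5\rangle\subset\langle\mathcal{A}_6\rangle^{\bot}=\mathcal{V}^{\bot}$.

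Next I would treat the minimiser to the fibre. Let $p^{*}\in\sigma_d([g])$ realise $\min_{p\in[g]}d_\cG(p,e)$. Using geodesic completeness (Hopf--Rinow, resp. its Chow--Rashevskii analogue in the SR case), pick a minimising geodesic $\gamma:[0,1]\to G$ from $e$ to $p^{*}$; since $d_\cG(p^{*},e)=\min_{p\in[g]}d_\cG(p,e)$, this $\gamma$ also realises the distance from $e$ to the submanifold $[g]=p^{*}H$. The first variation formula --- equivalently the transversality condition in the Pontryagin Maximum Principle with target $[g]$ --- then forces the terminal momentum to annihilate $T_{p^{*}}[g]=(L_{p^{*}})_{*}\mathfrak{h}=\langle\mathcal{A}_6|_{p^{*}}\rangle$, i.e. $\lambda_6(1)=\langle\lambda(1),\mathcal{A}_6|_{\gamma(1)}\rangle=0$. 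Combining this with the constancy of $\lambda_6$ from Theorem~\ref{thm:no_acceleration} I obtain $\lambda_6\equiv0$ along $\gamma$, hence $u^6=g^{66}\lambda_6\equiv0$ in the Riemannian case and $u^6\equiv0$ automatically in the SR case; so $\gamma$ is horizontal. The same reasoning applies to every minimising geodesic ending at any point of $\sigma_d([g])$.

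The third assertion is then immediate: by definition $\sigma_d([g])$ is the set of fibre points at minimal distance from $e$, and the previous step connects each of them to $e$ by a minimising horizontal geodesic --- which is precisely the Riemannian submersion theorem read through the section $\sigma_d$. I expect the only delicate point to be the passage from orthogonality of $\dot\gamma$ to the fibre \emph{at the endpoint} (which the first variation supplies) to horizontality \emph{along the entire geodesic}: this is exactly where the statement $\dot\lambda_6\equiv0$ of Theorem~\ref{thm:no_acceleration} (no momentum exchange with the fibre direction) does the work, so the substantive step has already been carried out and what remains is bookkeeping. A secondary subtlety worth a remark: in the SR case one should note that the minimiser in question is a normal geodesic (the relevant distribution on $\SE(3)$ admits no strictly abnormal minimisers), and when $g_{66}=0$ the component $u^6$ does not enter the length functional and may simply be set to zero, so the conclusion is unaffected either way.
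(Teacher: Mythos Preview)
Your proof is correct. The paper does not give a separate proof of the corollary; it treats part~1 as an immediate consequence of $\dot u^6=0$ from Theorem~\ref{thm:no_acceleration}, and for part~2 relies (as made explicit in the proof of Theorem~\ref{th:3}) on Riemannian submersion theory together with the Pythagorean decomposition $T(G)=\mathcal{H}\oplus\mathcal{V}$: one lifts the minimizing geodesic in $G/H$ horizontally and observes that any other curve projecting to it has the same horizontal velocity component but extra vertical length.

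Your route for part~2 is genuinely different: you stay inside the Hamiltonian/PMP formalism already developed in Theorem~\ref{thm:no_acceleration}, using the transversality condition $\lambda_6(1)=0$ at the endpoint and then propagating horizontality backwards via $\dot\lambda_6=0$. This has the advantage of being self-contained and not invoking submersion theory as a black box; the paper's approach, on the other hand, makes transparent the identity $d_\cG(\sigma_d([g]),e)=d_{G/H}([g],[e])$ and the link with the classical literature. Your remark that in the SR case one needs the minimiser to be a normal extremal (absence of strictly abnormal minimisers for this distribution on $\SE(3)$) is a point the paper leaves implicit when writing down~(\ref{HFSR}), and it is appropriate that you flag it.
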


Cor.1 recovers Riemannian submersion theory \cite{Gallot1987,Mennucci2013,kling95,Michor}, and with Thm.1 we prove constant momentum and zero acceleration along fibers. 
It also covers the SR case, aligning with \cite[Theorem~2]{duits2018optimal}, where
SR geodesics/balls are limits of Riemannian geodesics/balls when anisotropy tends to infinity.  

Next we generalize Thm.\ref{thm:no_acceleration}, Cor.~\ref{cor:1} to (reductive) homogeneous spaces \cite{arvanitogeorgos2003introduction} with legal metrics.

\begin{definition}[Reductive Homogeneous Space]
    \label{def:reductive}
    Let $G$ be a Lie group with Lie algebra $\mathfrak{g}=T_e(G)$, and $H$ a closed subgroup with Lie algebra $\mathfrak{h}:=T_e(H)$.
    A homogeneous space $G/H$ is called \emph{reductive} if there exist a subspace $\mathfrak{m} \subset \mathfrak{g}$ such that $\mathfrak{g} = \mathfrak{h} \oplus \mathfrak{m}$ and $\Ad(H)$ maps $\mathfrak{m}$ into itself.
\end{definition}

\begin{lemma}
    Let $\cG$ be a legal metric on $G$ w.r.t. Lie sub-group $H$.
    Then $G/H$ is reductive with $\mathfrak{g}=\mathfrak{h} \oplus \mathfrak{m}$, $\mathfrak{m}=\mathfrak{h}^{\bot}$. 
\end{lemma}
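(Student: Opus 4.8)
The plan is to unwind the definitions and show that $\mathfrak{m} := \mathfrak{h}^{\bot}$ (orthogonal complement with respect to the $\Ad(H)$-invariant inner product on $\mathfrak{g}$ supplied by the legal metric $\cG$) witnesses reductivity in the sense of Def.~\ref{def:reductive}. There are exactly two things to verify: the direct-sum decomposition $\mathfrak{g} = \mathfrak{h} \oplus \mathfrak{m}$, and the invariance $\Ad(H)\mathfrak{m} \subseteq \mathfrak{m}$.

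First I would record the input: by Def.~\ref{def:legal_metric}, a legal metric is by definition given by an $\Ad(H)$-invariant scalar product $\langle \cdot,\cdot\rangle_{\mathfrak{g}}$ on $\mathfrak{g} = T_e(G)$ (the one whose value at $e$ is $\cG_e$, left-translated to the whole group). Since $\mathfrak{h} = T_e(H)$ is a linear subspace of the finite-dimensional inner-product space $(\mathfrak{g}, \langle\cdot,\cdot\rangle_{\mathfrak{g}})$, its orthogonal complement $\mathfrak{m} := \mathfrak{h}^{\bot}$ satisfies $\mathfrak{g} = \mathfrak{h} \oplus \mathfrak{m}$ as vector spaces — this is just finite-dimensional linear algebra and needs no special structure.

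Next I would prove $\Ad(h)\mathfrak{m} \subseteq \mathfrak{m}$ for every $h \in H$. The key facts are: (i) $\mathfrak{h}$ is $\Ad(H)$-invariant, because $H$ is a subgroup, so conjugation by $h \in H$ preserves $H$ and hence its tangent space $\mathfrak{h}$ at $e$; and (ii) each $\Ad(h)$ is an isometry of $\langle\cdot,\cdot\rangle_{\mathfrak{g}}$ by the defining $\Ad(H)$-invariance of the scalar product. Combining these: for $X \in \mathfrak{m} = \mathfrak{h}^{\bot}$ and any $Y \in \mathfrak{h}$, write $Y = \Ad(h)\big(\Ad(h^{-1})Y\big)$ with $\Ad(h^{-1})Y \in \mathfrak{h}$ by (i); then
\[
\langle \Ad(h)X, Y\rangle_{\mathfrak{g}} = \langle \Ad(h)X, \Ad(h)(\Ad(h^{-1})Y)\rangle_{\mathfrak{g}} = \langle X, \Ad(h^{-1})Y\rangle_{\mathfrak{g}} = 0,
\]
the last equality since $\Ad(h^{-1})Y \in \mathfrak{h}$ and $X \perp \mathfrak{h}$. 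As $Y \in \mathfrak{h}$ was arbitrary, $\Ad(h)X \in \mathfrak{h}^{\bot} = \mathfrak{m}$, which is exactly the required invariance. With both conditions of Def.~\ref{def:reductive} met, $G/H$ is reductive with the claimed splitting.

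There is no real obstacle here — the statement is essentially a repackaging of the definition of "legal metric" into the language of reductive homogeneous spaces. The only point requiring a line of care is (i), the $\Ad(H)$-invariance of $\mathfrak{h}$, which one should state explicitly rather than leave implicit: it follows from $h H h^{-1} = H$ and differentiating at $e$. Everything else is a one-line orthogonality argument plus the elementary splitting of an inner-product space into a subspace and its complement.
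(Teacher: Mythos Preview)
Your argument is correct. The paper's proof establishes the same conclusion but works infinitesimally: it differentiates the $\Ad(H)$-invariance of the scalar product to obtain skew-adjointness of $\ad_X$ for $X\in\mathfrak{h}$, i.e.\ $\cG(\ad_X Y,Z)+\cG(Y,\ad_X Z)=0$, and from this deduces $[\mathfrak{h},\mathfrak{m}]\subseteq\mathfrak{m}$. You instead argue directly at the group level, using that each $\Ad(h)$ is an isometry preserving $\mathfrak{h}$ and hence preserving $\mathfrak{h}^{\bot}$. The two arguments are dual to one another, but yours has the advantage of matching Def.~\ref{def:reductive} verbatim: that definition asks for $\Ad(H)\mathfrak{m}\subseteq\mathfrak{m}$, which is what you prove, whereas the paper's bracket condition $[\mathfrak{h},\mathfrak{m}]\subseteq\mathfrak{m}$ is equivalent to this only when $H$ is connected. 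Your explicit mention of why $\mathfrak{h}$ is $\Ad(H)$-invariant (from $hHh^{-1}=H$) is also a useful clarification.
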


\begin{proof}
    It suffices to show that \([X,Y] \in \mathfrak{m} \) for all \(X \in \mathfrak{h}\) and \(Y \in \mathfrak{m}\).
    Given that \(\mathfrak{m} = \mathfrak{h}^\bot\), this is equivalent to showing that \(\cG([X,Y],Z) = 0\) for all \(Z \in \mathfrak{h}\).
    By $\Ad_H$ invariance of $\cG$ one has
    $\cG(\ad_X Y, Z) + \cG(Y, \ad_X Z) = 0 \ \forall\ Y, Z \in \mathfrak{g}, X \in \mathfrak{h}$ as $\textrm{Ad}_*=\textrm{ad}$. 
    So, 
    $\cG([X,Y],Z) = - \cG(Y,[X,Z]) = 0$ as $[X,Z] \in \mathfrak{h}$ and $Y \in \mathfrak{h}^{\bot}. \Box$
\end{proof}

\begin{theorem} \label{th:3}
    Let $\cG$ be a legal metric on $G$ w.r.t. Lie sub-group $H$. 
    The minimal  distance section $\sigma_d$ (Def.~\ref{def:section}) takes in a fiber $[g] \in G/H$ the element $g^* \in G$ reachable by a horizontal minimizing geodesic departing from $e$. 
    Moreover, if $H$ is commutative, geodesics in $G$ have constant vertical momentum and no acceleration along the fibers.
\end{theorem}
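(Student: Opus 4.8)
The plan is to mirror the proof of Theorem~\ref{thm:no_acceleration}, but replace the explicit $\SE(3)$ computations with structural arguments valid on any reductive homogeneous space. First I would invoke the preceding Lemma to get the reductive splitting $\mathfrak{g} = \mathfrak{h} \oplus \mathfrak{m}$ with $\mathfrak{m} = \mathfrak{h}^\bot$, so that the left-invariant horizontal distribution $\mathcal{H}$ (Riemannian case, $\mathcal{H}=\mathcal{V}^\bot$ with $\mathcal{V}$ the fiber direction) or a $\Delta \subset \mathcal{H}$ (SR case, where the relevant part of $\mathfrak{m}$ generates the bracket-generating distribution) is $\Ad(H)$-invariant. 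The key point is that a legal metric descends to a Riemannian metric on $G/H$ and that $\pi:(G,\cG)\to (G/H, \bar\cG)$ is a (sub-)Riemannian submersion; I would state this and cite the submersion references already in the excerpt.

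The heart of the argument is the Pontryagin / Hamiltonian computation. For a legal metric, the left-invariant Hamiltonian $\mathfrak{h}(\lambda) = \tfrac12 \cG^{-1}_\gamma(\lambda,\lambda)$ (or the SR analogue with $P_\Delta^*$) has its momentum equation $\dot\lambda = \mathrm{coad}_{\dot\gamma}(\lambda)$ on $\mathfrak{g}^*$, i.e. the Euler equation $\dot\lambda_i = \{\mathfrak{h},\lambda_i\} = \sum_{j,k} c^k_{ji}\,\lambda_k\, u^j$ where $u^j = g^{jj}\lambda_j$ (Riemannian) with the $c^k_{ij}$ the structure constants. I would then specialize to $\lambda_v$, the component of $\lambda$ along a basis of $\mathfrak{h}$: since $H$ is commutative, $[\mathfrak{h},\mathfrak{h}]=0$, and since the metric is legal, $\Ad(H)$-invariance gives $\mathrm{ad}_X$ skew-symmetric for $X\in\mathfrak{h}$, hence for any vertical basis vector $A_v$ the terms $\sum_{j,k}c^k_{jv}\lambda_k g^{jj}\lambda_j$ collapse in pairs exactly as in the $\SE(3)$ computation $g^{11}(1-1)\lambda_1\lambda_2 + g^{44}(1-1)\lambda_4\lambda_5 = 0$ — more precisely, $\{\mathfrak{h},\lambda_v\} = \cG(\mathrm{ad}_{A_v}\dot\gamma,\dot\gamma)=0$ by skew-symmetry of $\mathrm{ad}_{A_v}$ under $\cG$. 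Commutativity of $H$ is what kills the would-be diagonal/self-coupling terms $c^w_{vv'}$ with $v,v'$ both vertical. Then $\dot\lambda_v = 0$ gives constant vertical momentum, and in the Riemannian case $u^v = g^{vv}\lambda_v$ gives constant $u^v$ with $\dot u^v=0$; if additionally a geodesic starts horizontal ($\lambda_v(0)=0$ for all vertical $v$, equivalently $u^v(0)=0$), it stays horizontal. The claim that $\sigma_d$ lands on the endpoint of a minimizing horizontal geodesic then follows exactly as in Cor.~\ref{cor:1}: by legality $d_\cG(e, gh) $ is minimized over $h\in H$ at the foot of the geodesic from $e$ realizing $d_{\bar\cG}(\pi(e),\pi(g))$, which is horizontal, and horizontal lifts of minimizing base geodesics are minimizing in $G$ among horizontal curves, while the constant-momentum result shows such a lift is an honest geodesic of $(G,\cG)$.

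The step I expect to be the main obstacle is making the skew-symmetry / vanishing argument clean \emph{without} the commutativity-free cancellations that happened to occur in $\SE(3)$: one must isolate precisely that the only structure constants that could survive in $\{\mathfrak{h},\lambda_v\}$ pair a vertical index with two indices, and use (a) $\Ad(H)$-invariance to pair $c^{j}_{vk}$ with $c^{k}_{vj}$ against $g^{jj}=g^{kk}$ (this needs the legal metric to be block-diagonal in a way compatible with the $\Ad(H)$-representation, which follows from $\Ad(H)$-invariance), and (b) commutativity of $H$ to discard the purely vertical brackets. A secondary subtlety is the SR case: one must check that $P_\Delta^*$ commutes appropriately with $\mathrm{ad}_{A_v}$ for vertical $A_v$ — this holds because $\Delta$ and $\mathcal{V}$ are both $\Ad(H)$-invariant — so that the same Poisson-bracket cancellation yields $\dot\lambda_v = g_{vv}\dot u^v = 0$. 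I would handle both cases in one sweep by writing the Euler equation abstractly on $\mathfrak{g}^*$ and projecting onto $\mathfrak{h}^* \cong \mathfrak{m}^\perp$, remarking that the reductive splitting plus legality make the vertical momentum a conserved quantity (a ``Noether'' charge for the right $H$-action), and that commutativity of $H$ removes the last obstruction to $\dot u^v = 0$.
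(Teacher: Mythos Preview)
Your proposal is correct and tracks the paper's proof closely: both invoke Riemannian submersion theory for the first claim and the Euler/coadjoint equation $\dot\lambda=\mathrm{coad}_{\dot\gamma}(\lambda)$ on $\mathfrak{g}^*$ for the second. The one substantive difference is how the vanishing of $\dot\lambda_v$ is packaged. The paper's route is the bracket identity $P_{\mathfrak{h}}[\mathfrak{g},\mathfrak{h}]=0$ (reductivity from the Lemma gives $P_{\mathfrak{h}}[\mathfrak{m},\mathfrak{h}]=0$, commutativity gives $[\mathfrak{h},\mathfrak{h}]=0$), so that $[\dot\gamma,A_v]\in\mathfrak{m}$, after which one still implicitly needs skew-symmetry to conclude. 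You instead make the $\cG$-skew-symmetry of $\ad_{A_v}$ for $A_v\in\mathfrak{h}$ the centerpiece and read $\dot\lambda_v=-\cG(\dot\gamma,\ad_{A_v}\dot\gamma)=0$ as a Noether conservation law for the right $H$-action; this is cleaner and in fact shows that in the Riemannian case commutativity is not even required for constant vertical momentum---your appeal to commutativity to kill the $c^{w}_{vv'}$ terms is correct but redundant there, since skew-symmetry already disposes of them. For the first claim the paper simply cites the submersion references for ``horizontal geodesics stay horizontal'' and applies Pythagoras on $T(G)=\mathcal{H}\oplus\mathcal{V}$, whereas you additionally recover that fact from your Hamiltonian computation; either works.
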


\begin{proof}
    By Riemannian submersion theory  \cite[Prop.~2.109]{Gallot1987}, \cite[Cor. 11.25]{Mennucci2013}, \cite[Cor. 1.11.11]{kling95} or \cite[cor. 26.12]{Michor}), geodesics that start horizontal remain horizontal.
    Thereby the minimizing geodesic between $[e]$ and $[g]$  in $G/H$ has the same length as its  horizontal lift starting at $e$. 
    All curves in $G$ that project to the minimizing geodesic in $G/H$ have the same horizontal component of their velocity. 
    By the Pythagorean theorem on $T(G)=\mathcal{H}\oplus \mathcal{V}$ with $\mathcal{H} = \mathcal{V}^\bot$, the shortest one is the horizontal one, with end-point $g^* = \sigma_d([g])$, and $d_{\cG}(\sigma_d(g),e)= d([g],[e])$.
    
    Now Eq. (\ref{HFR}), (\ref{HFSR}) hold generally for left-invariant (S)R problems on Lie groups: the left-invariant HF on Lie group $G$ with left-invariant metric yields the Euler equation on $T^*_e(G)$ \cite[Sec.~I.4]{Arnold_Khesin}. 
    Now $H$ commutative $\Rightarrow [\mathfrak{h},\mathfrak{h}]=0$ and
    \begin{equation} \label{coreid}
        \boxed{
        \begin{array}{l}
            \cG \textrm{ legal} \overset{\textrm{Lem.1}, [\mathfrak{h},\mathfrak{h}]=0}{\Rightarrow} P_{\mathfrak{h}} \ad(\mathfrak{h})=0 \overset{\cG \textrm{ legal }}{\Rightarrow}
            P_{\mathcal{V}} \ad(\mathcal{V})=0 \Rightarrow  \\
            P_{\mathcal{V}^*} \textrm{coad}_{\dot{\gamma}}( \lambda) \overset{(\ref{HFR}) }{=} P_{\mathcal{V}^*}
            (\dot{\lambda})=0 
            \textrm{ and }\dot{\gamma} \overset{(\ref{HFR}) }{=} \cG^{-1}\lambda
        \end{array}
        }
    \end{equation}
    where $t \mapsto (\gamma(t),\lambda(t))$ solves the geodesic flow (\ref{HFR}) on $T^{*}G$ and with \emph{orthogonal} projections: \mbox{$P_{\mathfrak{h}}: \mathfrak{g} \to \mathfrak{g}$ on $\mathfrak{h}=T_e(H)$ and $P_{\mathcal{V}}: T(G) \to T(G)$ on $\mathcal{V}$.} 
    For  vanishing acceleration along the fibers: $P_{\mathcal{V}} \ddot{\gamma}=\cG^{-1}P_{\mathcal{V}^*} \cG (  \ddot{\gamma})=\cG^{-1}P_{\mathcal{V}^*} (\frac{d}{dt}( \cG \dot{\gamma})) \overset{(\ref{coreid})}{=}0$, with  $\ddot{\gamma}:=\nabla^{[0]}_{\dot{\gamma}}\dot{\gamma}$, with $\nabla^{[0]}$ the metric compatible Lie-Cartan connection \cite{duits2021Springer}.
    ~$\Box$ 
\end{proof}

\section{Section $\sigma([g])$ is locally close to $\sigma_{\rho}([g])$ and $\sigma_d([g])$}

Henceforth we consider the Riemannian setting (with finite anisotropy), where 
one has $\sigma_\rho([g]) \approx \sigma_{d}([g])$ if $g \approx e$ as
$|d_{\cG}(\cdot,e)|^2=|\rho_{\cG}|^2(1+ O(\rho_{\cG}^2))$, cf.~\cite{bellaard2022analysis,smets2022pdebased}. 

\begin{proposition} \label{prop:SO3}
    We have
    $\rho_{\cG}(\sigma([g])) \geq \rho_{\cG}(\sigma_{\rho}([g])) \geq d_{\cG}(\sigma_d([g]), e)$
    with equality when restricting to $\{0\} \times \SO(3)/H \equiv \SO(3)/\SO(2)\equiv S^2$.
\end{proposition}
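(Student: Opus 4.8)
\noindent\emph{Proof plan.}
The plan is to establish the chain of inequalities for an arbitrary class $[g]$ first, and then to show that on the sphere it collapses by squeezing the two ``expensive'' quantities $\rho_{\cG}(\sigma_{\rho}([g]))$ and $d_{\cG}(\sigma_d([g]),e)$ between two copies of the \emph{explicitly computable} $\rho_{\cG}(\sigma([g]))$. The first inequality of the chain is merely the defining property of $\sigma_{\rho}$ in \eqref{sigma_rho}: $\rho_{\cG}(\sigma_{\rho}([g]))=\min_{p\in[g]}\rho_{\cG}(p)\le\rho_{\cG}(\sigma([g]))$, since $\sigma([g])\in[g]$. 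For the second inequality I would use that for every $p\in G$ the one-parameter subgroup $t\mapsto\exp(t\log p)$, $t\in[0,1]$, joins $e$ to $p$ with velocity of \emph{constant} $\cG$-norm $\|\log p\|_{\cG}$ by left-invariance of $\cG$, hence has length $\rho_{\cG}(p)$; thus $d_{\cG}(p,e)\le\rho_{\cG}(p)$ for all $p$. Evaluating at $p=\sigma_{\rho}([g])$ and recalling that $\sigma_d$ realizes the fiberwise minimum of $d_{\cG}(\cdot,e)$ (Def.~\ref{def:section}) gives $d_{\cG}(\sigma_d([g]),e)\le d_{\cG}(\sigma_{\rho}([g]),e)\le\rho_{\cG}(\sigma_{\rho}([g]))$.

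\smallskip
For the collapse on $\{\ul 0\}\times\SO(3)/H$, identify $[(\ul 0,R)]$ with $R\ul e_z\in S^{2}$ (well defined since $H$ fixes $\ul e_z$) and write $R=R_{e_z,\gamma}R_{e_y,\beta}R_{e_z,\alpha}$ with $\beta\in(0,\pi)$. By \eqref{sigma}, $\sigma([g])=(\ul 0,R_{e_z,\gamma}R_{e_y,\beta}R_{e_z,-\gamma})$, which is the rotation over angle $\beta$ about the \emph{unit} axis $\ul n:=R_{e_z,\gamma}\ul e_y$ orthogonal to $\ul e_z$. Hence $\log\sigma([g])=\beta\,\ul n^{\wedge}\in\langle A_4,A_5\rangle$ and, since $\cG$ weighs $A_4$ and $A_5$ equally by $g_{44}$,
\[
  \rho_{\cG}(\sigma([g]))=\|\log\sigma([g])\|_{\cG}=\sqrt{g_{44}}\,\beta .
\]
For the matching lower bound on $d_{\cG}(\sigma_d([g]),e)$, I would first use that $\cG$ is legal, so $\pi$ is a Riemannian submersion and $d_{\cG}(\sigma_d([g]),e)=\min_{p\in[g]}d_{\cG}(p,e)=d_{G/H}([e],[g])$ (Riemannian submersion theory; cf.\ Cor.~\ref{cor:1}). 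Next, I would observe that the $\SE(3)$-equivariant surjection $P:G/H\cong\R^{3}\times S^{2}\to S^{2}$, $(\ul x,\ul v)\mapsto\ul v$, is itself a Riemannian submersion onto $S^{2}$ with the \emph{round} metric of radius $\sqrt{g_{44}}$: at the base point $[e]$ its differential annihilates $\langle A_1,A_2,A_3\rangle$ and maps $\langle A_4,A_5\rangle$ (carrying the inner product $g_{44}\,\mathrm{id}$) isometrically onto $T_{\ul e_z}S^{2}$ with that round metric, and this extends to all of $G/H$ because the metric induced by $\cG$ and the round metric are both $\SE(3)$-invariant while $P$ is equivariant. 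Since Riemannian submersions are $1$-Lipschitz,
\[
  d_{\cG}(\sigma_d([g]),e)=d_{G/H}([e],[g])\ \ge\ d_{S^{2}}(\ul e_z,R\ul e_z)=\sqrt{g_{44}}\,\beta ,
\]
the last equality being the elementary fact that on the round sphere of radius $\sqrt{g_{44}}$ the distance from the pole $\ul e_z$ to $R\ul e_z$ is $\sqrt{g_{44}}$ times the colatitude $\beta$ (here $\beta\in(0,\pi)$, strictly inside the cut locus). Combining with the chain above sandwiches $\rho_{\cG}(\sigma_{\rho}([g]))$ and $d_{\cG}(\sigma_d([g]),e)$ between $\sqrt{g_{44}}\,\beta$ and $\sqrt{g_{44}}\,\beta$, so all three agree on $S^{2}$.

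\smallskip
The step I expect to be the main obstacle is justifying that $P:G/H\to S^{2}$ really is a Riemannian submersion onto the round sphere of the \emph{correct radius}: one has to keep track of the semidirect-product ``twist'' and check that the $P$-horizontal distribution and the induced horizontal metric are the pull-back of the round metric at \emph{every} point, which is exactly where $\SE(3)$-invariance of the quotient metric together with equivariance of $P$ does the work; everything else (the two inequalities, the closed form of $\log\sigma([g])$, the round-sphere distance) is routine. As a by-product of the pinching, $\sigma([g])$ minimizes both $\rho_{\cG}$ and $d_{\cG}(\cdot,e)$ over the fiber; combining this with Cor.~\ref{cor:1} and the observation that the horizontal lift from $e$ of the minimal meridian geodesic in $S^{2}$ is precisely the one-parameter subgroup $t\mapsto R_{e_z,\gamma}R_{e_y,t\beta}R_{e_z,-\gamma}$ would also give the stronger conclusion that the sections $\sigma$, $\sigma_d$, $\sigma_{\rho}$ literally coincide on $S^{2}\equiv\SO(3)/\SO(2)$.
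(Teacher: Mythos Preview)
Your chain of inequalities is exactly the paper's argument: $\sigma([g])$ is a particular element of the fiber, and the exponential curve is a particular connecting curve.

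For the equality on $S^2$ your route is correct but \emph{different} from the paper's. You sandwich everything between two explicit copies of $\sqrt{g_{44}}\,\beta$: the upper one by computing $\log\sigma([g])=\beta\,\ul n^{\wedge}\in\langle A_4,A_5\rangle$, the lower one via a second Riemannian submersion $P:G/H\to S^2$ onto the round sphere of radius $\sqrt{g_{44}}$ (your verification that $P$ is a submersion is fine: the left-invariant frame $g_*A_1,\dots,g_*A_5$ splits into the spatial block $\ker dP$ and the spherical block $\langle g_*A_4,g_*A_5\rangle$, and the latter carries $g_{44}\cdot\mathrm{id}$ at every point). The paper instead invokes Theorems~\ref{thm:no_acceleration} and~\ref{th:3} to argue that the fiber-minimizing geodesic is horizontal and hence does not see $g_{66}$ at all; one may then \emph{set} $g_{66}=g_{44}$, obtaining the bi-invariant metric on $\SO(3)$ in which geodesics are one-parameter subgroups, so $d_{\cG}=\rho_{\cG}$ there and the minimum over the fiber is attained at $c^6=0$, i.e.\ at $\sigma([g])$. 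The paper's trick is shorter and showcases the ``$g_{66}$ is irrelevant'' message of Theorem~\ref{thm:no_acceleration}; your argument is more self-contained (it uses only standard submersion theory, not the constant-momentum result) and has the bonus of producing the explicit value $\sqrt{g_{44}}\,\beta$ for all three quantities.
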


\begin{proof}
    The inequality follows from definitions $\sigma$, $\sigma_{\rho}$, $\sigma_d$ as $g_0 \in [g]$ with $\alpha=-\gamma$ is a specific point, and a connecting exp curve is a specific connecting curve.
    By Thm.~\ref{th:3} a minimal geodesic from $e$ is horizontal, so the minimizer in $[g]$ is independent of $g_{66}$, Thm.\ref{thm:no_acceleration}. 
    Choose $g_{66}=g_{44}$ for a bi-invariant metric where geodesics are exp curves, so $\rho_{\cG}(\sigma(g))\!=\!\rho_{\cG}(\sigma_{\rho}([g])) \!=\! d_{\cG}(\sigma_d([g]), e), \forall g=(\ul{0},R). \hfill \Box$
\end{proof}
\begin{remark}
By Prop.1 (the case of equality) and translation invariance of $\sigma$ (recall (\ref{sigma}))
 section $\sigma$ selects the element $g=\sigma([g])\in SE(3)$ within the fiber $[g]$ with minimal angular velocity, i.e.: 
$
\sigma([g])=
\argmin_{p \in [g]} \|P_{T(SO(3))}\log p\|_{\mathcal{G}}
$.
\end{remark}
\begin{lemma}\label{lem:ref}
    Let $\cG$ be a legal metric given by \eqref{eq:legal_metric_se3}. 
    Set $g_0=\sigma([g])$. 
    Let $[g]=(\ul{x},\ul{n}) \in G/H$ and $[e]=(\ul{0},\ul{a})\in G/H$ be co-planar: $\det(\ul{x}|\ul{n}|\ul{a})=0$. 
    Then $\rho_{\cG}(g_0 h)= \rho_{\cG}(g_0 h^{-1})$ for all $g \in G, h \in H.$
\end{lemma}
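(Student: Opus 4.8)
The plan is to realize the map $h\mapsto h^{-1}$ on the fibre as (the restriction of) a norm-preserving automorphism of $(G,\cG)$ that fixes $g_0=\sigma([g])$, and then to read off the claim. Let $\Pi\subset\R^3$ be the plane through the origin spanned by $\ul{x},\ul{n},\ul{a}$; for $\beta\in(0,\pi)$ the vectors $\ul{a}$ and $\ul{n}=R\ul{a}$ are linearly independent, so $\Pi=\mathrm{span}(\ul{a},\ul{n})$ is a genuine $2$-plane and the coplanarity hypothesis $\det(\ul{x}\,|\,\ul{n}\,|\,\ul{a})=0$ says exactly that $\ul{x}\in\Pi$. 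Let $S\in O(3)$ be the orthogonal reflection across $\Pi$, so $\det S=-1$, $S|_{\Pi}=\mathrm{id}$ and $S|_{\Pi^{\bot}}=-\mathrm{id}$, and set $\phi_S(\ul{y},R):=(S\ul{y},\,SRS^{-1})$. One checks directly that $\phi_S$ is a Lie group automorphism of $\SE(3)$, whence $\phi_S\circ\exp=\exp\circ(d\phi_S)_e$ and therefore $\log\phi_S(p)=(d\phi_S)_e\log p$. Consequently $\rho_\cG\circ\phi_S=\rho_\cG$ as soon as $(d\phi_S)_e$ preserves the inner product $\cG_e$ on $\mathfrak g$ defining $\cG$.

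I would first verify this isometry property. Writing $\mathfrak g=\R^3\oplus\mathfrak{so}(3)$ and identifying $\mathfrak{so}(3)\cong\R^3$ via the hat map, $(d\phi_S)_e$ acts as $\ul{v}\mapsto S\ul{v}$ on the translational summand and, by the identity $S\widehat{\ul{w}}\,S^{\top}=\widehat{(\det S)\,S\ul{w}}$, as $\ul{w}\mapsto-S\ul{w}$ on the rotational summand. The metric \eqref{eq:legal_metric_se3} is block diagonal in the frame adapted to $\ul{a}=\ul{e}_z$, with blocks $\mathrm{diag}(g_{11},g_{11},g_{33})$ and $\mathrm{diag}(g_{44},g_{44},g_{66})$. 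Since $S$ fixes $\ul{a}$, it preserves the $z$-axis and acts orthogonally on the $(\ul{e}_x,\ul{e}_y)$-plane, so $S$ (and hence $-S$) commutes with each block, and there are no cross terms; thus $(d\phi_S)_e$ is an isometry of $\cG_e$ and $\phi_S$ preserves $\rho_\cG$ (it is in fact a Riemannian isometry of $(G,\cG)$ fixing $e$).

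Next I would check that $\phi_S$ fixes $g_0$ and inverts $H$. Write $g_0=(\ul{x},R_0)$ with $R_0=R_{e_z,\gamma}R_{e_y,\beta}R_{e_z,-\gamma}$, which is the rotation by angle $\beta$ about the axis $\ul{b}:=R_{e_z,\gamma}\ul{e}_y$; a one-line computation gives $\ul{b}\cdot\ul{a}=0$ and $\ul{b}\cdot\ul{n}=0$, hence $\ul{b}\perp\Pi$ and $S\ul{b}=-\ul{b}$. Using $S\ul{x}=\ul{x}$ (this is where coplanarity enters) together with the conjugation identity $SR_{\ul{u},\theta}S^{-1}=R_{S\ul{u},-\theta}$ valid since $\det S=-1$, we obtain $SR_0S^{-1}=R_{-\ul{b},-\beta}=R_{\ul{b},\beta}=R_0$, so $\phi_S(g_0)=g_0$. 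For $h=(\ul{0},R_{\ul{a},\psi})\in H$ the same identity gives $SR_{\ul{a},\psi}S^{-1}=R_{\ul{a},-\psi}=(R_{\ul{a},\psi})^{-1}$, i.e. $\phi_S(h)=h^{-1}$. Since $\phi_S$ is a homomorphism, $\phi_S(g_0h)=\phi_S(g_0)\,\phi_S(h)=g_0h^{-1}$, and therefore $\rho_\cG(g_0h^{-1})=\rho_\cG(\phi_S(g_0h))=\rho_\cG(g_0h)$, which is the assertion.

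The part I expect to require the most care is the handling of the degenerate configurations, not the main argument. When $\beta\to 0$ (so $\ul{n}=\ul{a}$ and $R_0=I$) the plane $\Pi$ is no longer pinned down by $\ul{x},\ul{n},\ul{a}$ alone, but then any reflection $S$ across a plane containing $\ul{a}$ and $\ul{x}$ satisfies $\phi_S(g_0)=(S\ul{x},I)=g_0$, so the argument still applies; the antipodal case $\ul{n}=-\ul{a}$ lies outside the Euler-angle range $\beta\in(0,\pi)$ (and, as a direct computation shows, the symmetry genuinely fails there for a general metric). The other delicate point is the careful tracking of the sign $\det S=-1$ in the conjugation formulas for $\SO(3)$ and the induced minus sign on $\mathfrak{so}(3)$: this sign is precisely what converts $h$ into $h^{-1}$ while leaving $R_0$ and the metric blocks untouched. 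Everything else reduces to routine linear algebra in $\R^3$.
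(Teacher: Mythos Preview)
Your proof is correct and follows essentially the same route as the paper: both introduce the orthogonal reflection $S$ across the plane $\Pi$ (the paper's $S_V$), build the automorphism $\phi_S(\ul{y},R)=(S\ul{y},SRS^{-1})$ (the paper's conjugation by $s=(\ul{0},S_V)$), verify that it fixes $g_0$, inverts $h\in H$, and preserves $\|\cdot\|_{\cG}$ thanks to the isotropy of $\cG$ in the $\langle A_1,A_2\rangle$ and $\langle A_4,A_5\rangle$ planes. Your version is somewhat more explicit---you identify the rotation axis $\ul{b}=R_{e_z,\gamma}\ul{e}_y$ of $R_0$ directly and check $\ul{b}\perp\Pi$, whereas the paper works via the canonical coordinates $c^i$ of $g_0$ and the torsion-free condition $\ul{c}^{(1)}\perp\ul{c}^{(2)}$---but the underlying argument is the same.
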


\begin{proof} 
    Let $g \in G, h \in H$. Let $[g]$ and $[e]$ be in plane $V$. Let $S_V$ be the orthogonal reflection w.r.t. $V$.
    By co-planarity we can find a torsion free connecting exp curve : select $g_0=\exp (\sum_{i=1}^5 c^i A_i)$ from $[g]$ with spatial velocity $\ul{c}^{(1)}=(c^1,c^2,c^3)$ perpendicular to angular velocity $\ul{c}^{(2)}=(c^4,c^5,0)$. Set $s=(\ul{0},S_V) \in E(3)$. 
    Then
    \begin{equation} \label{conj}
        s g_0 s^{-1}=g_0\textrm{ and }s h s^{-1}=(\ul{0}, S_V R_{\ul{e}_z,\alpha} S_V^{-1})=(\ul{0}, R_{\ul{e}_z,-\alpha})=h^{-1}.
    \end{equation}
    Now $\rho_{\cG}(g_0 h)=\|\log g_0 h\|=\| \log s(g_0 h)s^{-1}\|$ as on Lie algebra-level conjugation with reflection $s$ just flips the signs in $A_1$ and $A_4, A_5$ direction and as $\cG$, Eq.~\!(\ref{eq:legal_metric_se3}), is isotropic diagonal in planes $\langle A_1,A_2\rangle$ and $\langle A_4,A_5\rangle$. 
    Thereby we find:
    \begin{equation}
    \begin{split}
        \rho_{\cG}(g_0 h) 
        &= \|\log (g_0 h)\| 
        = \|\log (s g_0 s^{-1}) (s h s^{-1})\| \\
        &\overset{(\ref{conj})}{=} \|\log(g_0 h^{-1})\|
        = \rho_{\cG}(g_0 h^{-1}).
    \qquad \qquad \Box\end{split}
\end{equation}
\end{proof}
\begin{theorem} \label{thm:sections} 
    In general $\sigma([g]) \neq \sigma_\rho([g])$ and 
    $\rho_{\cG}(\sigma([g])) =\rho_{\cG}(\sigma_{\rho}([g])) + \textrm{Error}_{\cG}([g]),$ with $\textrm{Error}_{\cG}([g])\to 0$ if $[g] \to [e]$.
    If $g_{11}=g_{22} \geq g_{33}$ then
    \begin{equation} \label{eq:error0}
        \begin{array}{l}
            1) \;\forall_{R \in \SO(3)} \forall_{z \in \R}\;:\; \textrm{Error}_{\cG}([(0,0,z,R])=0,
            \\[6pt] 2) \;
            [g] \textrm{ co-planar and close to }[e]  
            \Rightarrow \textrm{Error}_{\cG}([g])=0.
        \end{array}
    \end{equation}
\end{theorem}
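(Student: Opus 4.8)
The plan is to treat the four assertions in turn. Write $g_0:=\sigma([g])$, parametrise the fibre of $[g]$ as $\{g_0R_{e_z,\alpha}:\alpha\in(-\pi,\pi]\}$, and set $f(\alpha):=\rho_\cG^2(g_0R_{e_z,\alpha})$. By \eqref{eq:c6_expression} one has $c^6(g_0R_{e_z,\alpha})\propto\sin\alpha$, so $\sigma$ picks out $\alpha=0$ and $\mathrm{Error}_\cG([g])=0$ is equivalent to $\min_\alpha f=f(0)$; by Thm.~\ref{thm:no_acceleration} the fibre minimiser is insensitive to $g_{66}$, so I may set $g_{66}=g_{44}$, which makes the rotational block of $\cG$ isotropic. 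For the limit statement, Prop.~\ref{prop:SO3} gives $0\le\mathrm{Error}_\cG([g])=\rho_\cG(\sigma([g]))-\rho_\cG(\sigma_\rho([g]))\le\rho_\cG(\sigma([g]))$, and since $\sigma$ is smooth with $\sigma([e])=e$ and $\rho_\cG$ is continuous with $\rho_\cG(e)=0$, the right-hand side tends to $0$ as $[g]\to[e]$; a squeeze then gives $\mathrm{Error}_\cG([g])\to 0$. That $\sigma\neq\sigma_\rho$ in general I would establish by exhibiting a single explicit $[g]$ and legal metric --- necessarily with $[g]$ neither co-planar with $[e]$ nor with spatial part along $\ul a$, in view of items 1)--2) --- for which a direct (or numerical) evaluation of the two fibre minimisations yields $\rho_\cG(\sigma([g]))>\rho_\cG(\sigma_\rho([g]))$.

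The key to items 1)--2) is the evenness $f(\alpha)=f(-\alpha)$, which makes $\alpha=0$ a critical point of $f$. In case 2) this is immediate from Lemma~\ref{lem:ref}: co-planarity of $[g]$ with $[e]$ gives $\rho_\cG(g_0h)=\rho_\cG(g_0h^{-1})$ for all $h\in H$. To upgrade $\alpha=0$ to a \emph{global} minimiser when $[g]$ is close to $[e]$ I would argue perturbatively: as $[g]\to[e]$ one has $g_0\to e$ and $f(\alpha)\to g_{44}\alpha^2$ uniformly on $(-\pi,\pi)$, a function with a unique, non-degenerate minimum at $0$; hence for $[g]$ near $[e]$ the minimiser of $f$ lies in a fixed small neighbourhood of $0$, inside which (by $C^1$-closeness to $g_{44}\alpha^2$ together with the evenness) $\alpha=0$ is the only critical point, and so $\rho_\cG(\sigma_\rho([g]))=\rho_\cG(\sigma([g]))$. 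In case 1), $[(0,0,z,R)]$ and $[e]$ are automatically co-planar (both spatial parts being parallel to $\ul a$), so Lemma~\ref{lem:ref} again yields $f(\alpha)=f(-\alpha)$; equivalently one may use the reflection $s_V=(\ul 0,S_V)\in E(3)$ in the plane spanned by $\ul a$ and the horizontal unit vector orthogonal to the rotation axis of $R_0$, which fixes $(0,0,z)$ and $g_0$, sends $R_{e_z,\alpha}\mapsto R_{e_z,-\alpha}$, and acts on $\mathfrak{se}(3)$ by a sign change that preserves $\cG$ because $\cG$ is isotropic in the $\langle\cA_1,\cA_2\rangle$- and $\langle\cA_4,\cA_5\rangle$-planes (this is where $g_{11}=g_{22}$ is used).

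For case 1) no smallness of $[g]$ is assumed, so it remains to prove $f(\alpha)\ge f(0)$ for \emph{all} $\alpha$, and this is where the inequality $g_{33}\le g_{11}$ is needed. Using the in-plane isotropy I would first conjugate by $(\ul 0,R_{e_z,-\gamma})$ to reduce to $R_0=R_{e_y,\beta}$, and then expand $f$ through the closed form of the $\SE(3)$-logarithm \cite{DuitsIJCV2010}: with $\omega'(\alpha):=\log(R_{e_y,\beta}R_{e_z,\alpha})$, $\theta(\alpha):=|\omega'(\alpha)|$, $V(\cdot)$ the left Jacobian in that formula, and $\ul w(\alpha):=V(\omega'(\alpha))^{-1}\ul a$, one obtains
\begin{equation*}
    f(\alpha)=z^2\bigl(g_{11}\,|\ul w(\alpha)|^2-(g_{11}-g_{33})\,\langle\ul a,\ul w(\alpha)\rangle^2\bigr)+g_{44}\,\theta(\alpha)^2 .
\end{equation*}
The rotational term is controlled by the trace identity $\cos\theta(\alpha)=\tfrac12\bigl(\cos\alpha\,(1+\cos\beta)+\cos\beta-1\bigr)$, which shows $\theta(\alpha)^2$ is non-decreasing on $[0,\pi]$ (and even in $\alpha$), hence $\ge\theta(0)^2=\beta^2$. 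Since $g_{11}>0$ and $g_{11}-g_{33}\ge0$, the displayed formula together with the evenness reduces the claim to the two \emph{metric-independent} facts that $\alpha\mapsto|\ul w(\alpha)|^2$ is non-decreasing and $\alpha\mapsto\langle\ul a,\ul w(\alpha)\rangle^2$ is non-increasing on $[0,\pi]$: these make each of the three summands of $f$ non-decreasing on $[0,\pi]$, so $f(\alpha)\ge f(0)$ everywhere.

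These two monotonicity facts are the crux, and the step I expect to cost the most work: each requires the explicit components of $\ul w(\alpha)=V(\omega'(\alpha))^{-1}\ul a$ --- rational in $\cos(\alpha/2),\sin(\alpha/2),\cos(\beta/2),\sin(\beta/2)$ and in $\theta(\alpha)\cot(\theta(\alpha)/2)$ --- after which each reduces to a one-variable inequality in $\alpha$ with $\beta$ a parameter, provable by elementary but somewhat lengthy estimates (and checked directly at the degenerate cases $\beta=\pi$ and $\alpha\in\{0,\pi\}$, where $f$ is constant along the fibre). Granting them, the displayed formula and the $\theta$-estimate give $f(\alpha)\ge f(0)$, i.e. $\mathrm{Error}_\cG([(0,0,z,R)])=0$, which is assertion 1).
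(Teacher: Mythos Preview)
Your treatment of the limit $\mathrm{Error}_\cG([g])\to 0$ via the squeeze $0\le\mathrm{Error}\le\rho_\cG(\sigma([g]))$ is cleaner than the paper's (which appeals to $\rho_\cG\approx d_\cG$ near $e$), and for item~2) you and the paper argue identically: Lemma~\ref{lem:ref} gives evenness of $f$, Prop.~\ref{prop:SO3} gives a non-degenerate minimum at $\ul x=\ul 0$, and continuity of the second derivative in $\ul x$ pushes this to small~$\ul x$.

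There is, however, a genuine gap in your reduction ``by Thm.~\ref{thm:no_acceleration} the fibre minimiser is insensitive to $g_{66}$''. Theorem~\ref{thm:no_acceleration} concerns the \emph{geodesic} flow and hence $\sigma_d$; it says nothing about $\sigma_\rho$, which is defined purely algebraically through $\rho_\cG$. The correct reduction runs the other way: since $c^6(\sigma([g]))=0$, proving $\mathrm{Error}=0$ for $g_{66}=0$ implies it for every $g_{66}>0$ (the extra term $g_{66}|c^6|^2$ only penalises $c^6\neq 0$ further). But with $g_{66}=0$ your rotational contribution becomes $g_{44}\bigl((c^4)^2+(c^5)^2\bigr)=g_{44}\bigl(\theta^2-(c^6)^2\bigr)$ rather than $g_{44}\theta^2$, and your monotonicity scheme then needs a \emph{fourth} estimate. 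This does not damage item~2) (the limit $f(\alpha)\to g_{66}\alpha^2$ is still non-degenerate), but it does invalidate your displayed formula for $f(\alpha)$ in item~1) when $g_{66}<g_{44}$.

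For item~1) the paper takes a shorter route than your three-term monotonicity decomposition. It substitutes $\ul x=(0,0,z)$ directly into the closed-form $\SE(3)$ logarithm to obtain $c^3=z\bigl(1+f(q)((c^6)^2-q^2)\bigr)$, observes that $1-q^2f(q)=\tfrac{q}{2}\cot\tfrac{q}{2}\ge 0$ and $f\ge 0$ force $|c^3|$ to be minimal over the fibre at $c^6=0$, and then invokes $g_{11}\ge g_{33}$ (``moving along $c^1,c^2$ is more expensive than along $c^3$'') to conclude. This sidesteps your two ``crux'' monotonicity facts for $|\ul w(\alpha)|^2$ and $\langle\ul a,\ul w(\alpha)\rangle^2$ entirely: the point is that when $\ul x\parallel\ul a$ the single scalar $c^3$ already carries the decisive information, which your $|\ul w|^2$--$\langle\ul a,\ul w\rangle^2$ split does not exploit. (The paper's own final step, from ``$|c^3|^2$ minimal'' to ``$\rho_\cG^2$ minimal'', is itself terse and would benefit from the same kind of term-by-term bookkeeping you set up; but the key inequality it isolates is simpler than either of your outstanding ones.)
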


\begin{proof}
    The first statement follows from the fact that the logarithmic norm converges to the exact Riemannian distance \cite{bellaard2022analysis} and Thm~\ref{thm:no_acceleration}.
    
    Regarding item 1) in (\ref{eq:error0}):
    By Prop.~\ref{prop:SO3} it holds if $z=0$. 
    We have for spatial velocity $\ul{c}^{(1)}=(c^1,c^2,c^3)^T$ and angular velocity
    $\ul{c}^{(2)}=(c^4,c^5,c^6)^T$ 
    of $\log_{\SE(3)}g$:
    \begin{equation} \label{c1}
        \begin{array}{l}
            \textrm{spatial velocity:} \\
            \ul{c}^{(1)}(g)= \bx - \frac{1}{2} \ul{c}^{(2)}(g) \times \bx +
            f(q)\, \ul{c}^{(2)}(g) \times (\ul{c}^{(2)}(g) \times \bx), \\
            \textrm{angular velocity :} \\
            \ul{c}^{(2)}(g)=(c^4(g),c^5(g),c^6(g))=\log_{\SE(3)}(\ul{0},R)=(\ul{0},\log_{\SO(3)}R), 
        \end{array}
    \end{equation}
    $\textrm{for }g=(\ul{x},R)$,
    with $f(q)=
    (1-\frac{q}{2}\cot\left(\frac{q}{2}\right))/q^2 \geq 0$ with $f'(q)>0$.
    So \(|c^{3}(0,0,z,R)|^2\) \(= |z|^2\big|1+ f(q)(|c^{6}(0,0,z,R)|^2-q^2)\big|^2\) is minimal if $c^{6}=0$ as $1-q^2 f(q)\geq 0$, $f\geq0$.  
    Finally by assumption $g_{11}=g_{22}\geq g_{33}$ moving along $c^1$ or $c^2$ is more expensive then moving along $c^3$, and for $g=(0,0,z,R)$ we have
    $\rho_{\cG}(\sigma[g]))=\rho_{\cG}(\sigma_{\rho}[g]))$.
    
    Regarding item 2) in (\ref{eq:error0}):
    By the symmetry of Lemma~\ref{lem:ref} and the smoothness of the exp map  $h=(\ul{x},R_{\ul{e}_{z},\gamma} R_{\ul{e}_{y},\beta} R_{\ul{e}_{z},\alpha=-\gamma})$ must be a \emph{stationary point} of optimization problem $\min_{h \in H} \rho_{\cG}(gh)$, cf.Fig.~\ref{fig:counter}. 
    Now if the stationary point is a global minimum then $\textrm{Error}_{\cG}([g])$ vanishes. 
    Thereby we check the sign of $E_{\mathbf{x}}''$ of
    $
    [-\pi,\pi) \ni \alpha \overset{E_{\mathbf{x}}}{\mapsto} \rho_{\cG}(\mathbf{x},
    R_{\ul{e}_{z},-\alpha} R_{\ul{e}_{y},\beta} R_{\ul{e}_{z},\alpha}) \geq 0
    $.
    By Prop.~\ref{prop:SO3} we have $E''_{\ul{0}}(\cdot)>0$. 
    By continuity of $\mathbf{x} \mapsto E_{\mathbf{x}}''$, and $\rho_{\cG}(\ul{x},R) \to \rho_{\cG}(\ul{0},R)$ the stationary point $\alpha=-\gamma$ remains minimal for $\ul{x} \to \ul{0}$. 
    For $gh \to e$ one has $\rho_{\cG}(gh) \to d_{\cG}(gh,e)$ and by Thm~\ref{thm:no_acceleration}, the stationary-point remains
    the minimizer for $g$ close enough to $e$.
    $\hfill \Box$
\end{proof}
\begin{remark}
    On $\SO(3)/\SO(2)$ we had $\sigma=\sigma_{\rho}=\sigma_d$, cf.~Prop~\ref{prop:SO3}. On $\SE(3)/\SO(2)$ this no longer holds, though for many co-planar end-conditions it does hold.
    Numerics shows that errors are  small for the practically relevant cases, cf.~Fig.~\ref{fig:counter}.
    When $\frac{\min\{g_{11},g_{33}\}}{\max\{g_{44},g_{66}\}}\frac{\|\ul{x}\|}{\|\log (\ul{0},R)\|_{\mathcal{I}}} \gg 1$, the stationary point $\alpha=-\gamma$ for co-planar boundary conditions  (Lemma~\ref{lem:ref}) can switch from a min to a max (cf.Fig.\ref{fig:counter} top) yielding an $Error_{\cG}([g])>0$. 
    
    In future work we will compute this turning point, and analyze the error also for the no-coplanar case, using screw-motion coordinates \cite{BellaardSmets}. 
    Then Taylor expansion yields a practical refinement of section $\sigma$. 
\end{remark}
\begin{figure}
    \center
    \includegraphics[width=\hsize]{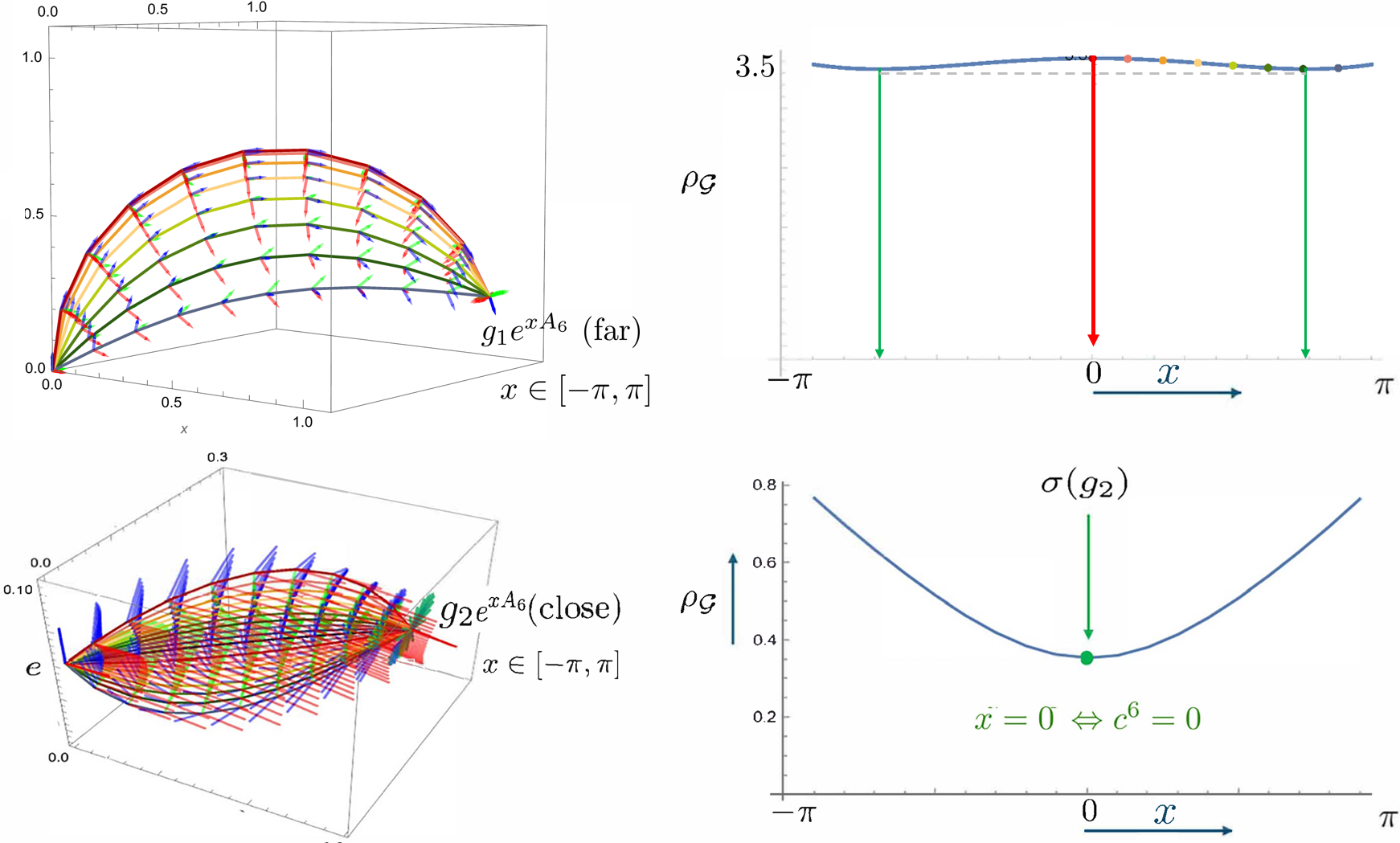}
    \caption{
        Illustration of Thm.\ref{thm:sections} and symmetry of Lemma~\ref{lem:ref}. Left: Exp curves in $\SE(3)$ that map unit element $[e]$ to a local orientation $[g_1]$ are plotted by their spatial projections along with a rotation frame.
        Right: section $\sigma$ can deviate from section $\sigma_{\rho}$ in $[g_1]$: $\textrm{Error}_{\cG}(g_1)= 0.1$.
        Settings top: $[g_1]=\{g_1 e^{x A_6}\;|\; x \in (-\pi,\pi]\}$, $g_1=\exp(2 A_3 + \tfrac{7 \pi}{16} A_4 + \tfrac{7 \pi}{16} A_5)$, co-planar, $\cG_e = \textrm{diag}(1, 1, 1, 1, 1, 0)$. 
        Here the discrepancy of the log distance between taking section $g_1=\sigma([g_1]) \Leftrightarrow x=\alpha+\gamma=0$, and the actual minimizer $\sigma_{\rho}([g_1])$ over the fiber is visible. 
        Settings bottom: For $g_2=\sigma([g_2])=\exp(\frac{1}{4}(A_3+A_2 + \frac{\pi}{14}A_5))$ (close to $e$) and $\cG_{e} = \textrm{diag}(1, 1, 1, 0.01, 0.01, 0.05)$ the error vanishes. 
    }
    \label{fig:counter}
\end{figure}
\section{Conclusion}

(Sub)-Riemannian geodesics in Lie groups $G$ with legal metrics w.r.t. commutative subgroup $H$ have constant momentum and zero acceleration along the left cosets (Thm.1\&2). 
As the minimal distance section $\sigma_d$ is reached by horizontal minimizing geodesics, we studied the effect of replacing the Riemannian distance $d_{\cG}(\sigma_d([g]),e)$ with its logarithmic norm approximation $\rho_{\cG}(\sigma_\rho[g])$. 
For $G=\SE(3)$, $H=\SO(2)$ an advocated symmetric section $\sigma$ (Lem.~\ref{lem:ref}) that coincides with $\sigma_{\rho}$ on $S^2$ (Prop.1) approximates $\sigma_{\rho}$ and $\sigma_d$ well (Thm.~\ref{thm:sections}, Fig.~\ref{fig:counter}). 
The analysis of the error and the stationary point (Fig.~\ref{fig:counter}) is left for future work.

\begin{credits}
\subsubsection{\ackname} We gratefully acknowledge the Dutch Foundation for Science NWO for funding VICI 2020 Exact Sciences (\mbox{VI.C.~202-031}).
The European Commission is gratefully acknowledged for financial support through Horizon Europe, MSCA-SE project 101131557 (REMODEL).
The authors acknowledge the excellent working conditions and interactions at Erwin Schrödinger International Institute for Mathematics and Physics, Vienna, during the thematic programme "Infinite-dimensional Geometry: Theory and Applications" where part of this work was completed.
The third Author is supported by FWF Grants I-5015 N and PAT1179524.

      \subsubsection{\discintname}
The authors have no competing interests to declare that are
relevant to the content of this article. 
\end{credits}

\bibliographystyle{splncs04}
\bibliography{literature}

\end{document}